\DeclareMathOperator*{\Ran}{Ran}
\DeclareMathOperator*{\Ker}{Ker}
\DeclareMathOperator*{\rank}{rank}
\newcommand{\dd}{\mathrm{d}}
\newcommand{\B}{\mathcal{B}}
\newcommand{\T}{\mathbb{T}}
\newcommand{\CC}{\mathbb{C}}
\newcommand{\NN}{\mathbb{N}}
\newcommand{\ZZ}{\mathbb{Z}}
\newcommand{\DD}{\mathbb{D}}
\newtheorem{thm}{Theorem}[section]
\newtheorem{prp}[thm]{Proposition}
\newtheorem{lem}[thm]{Lemma}
\newtheorem{cor}[thm]{Corollary}
\theoremstyle{definition}
\newtheorem{rem}[thm]{Remark}
\numberwithin{equation}{section}
\begin{document}

\title[Asymptotic behaviour of coupled systems]{Asymptotic behaviour of coupled systems in discrete and continuous  time}
\author{Lassi Paunonen}
\address{Department of Mathematics, Tampere University of Technology, PO.\ Box 553, 33101 Tampere, Finland}
\email{lassi.paunonen@tut.fi}

\author{David Seifert}
\address{St John's College, St Giles, Oxford\;\;OX1 3JP, United Kingdom}
\email{david.seifert@sjc.ox.ac.uk}

\begin{abstract}
This paper investigates the asymptotic behaviour of solutions to certain infinite systems of coupled recurrence relations. In particular, we obtain a characterisation of those initial values which lead to a convergent solution, and for initial values satisfying a slightly stronger condition we obtain an optimal estimate on the rate of convergence. By establishing a connection with a related problem in continuous time, we are able to use  this optimal estimate to improve the rate of convergence in the continuous setting obtained by the authors in a previous paper.  We illustrate the power of the general approach by using it to study several concrete examples, both in continuous and in discrete time.
 \end{abstract}

\subjclass[2010]{39A06, 39A30  (47A10, 47D06).}
\keywords{System, recurrence relations, asymptotic behaviour, rates of convergence, spectral theory, power-boundeness, $C_0$-semigroups.}
\thanks{This work was carried out while the first author visited Oxford in March 2016.  The visit was funded by the   COST Mathematics for industry network.}

\maketitle

\section{Introduction}\label{sec:intro} 

Consider a situation in which there are countably many agents, indexed by the integers $\ZZ$, such that agent $k\in\ZZ$ at time $n\ge0$ is in the position $x_k(n)\in\CC$. Suppose that the agents' positions change at each time step according to the rule
\begin{equation}\label{eq:bugs_intro}
x_k(n+1)=(1-\alpha)x_k(n)+\alpha x_{k-1}(n),\quad k\in\ZZ,\; n\ge0,
\end{equation}
where $\alpha\in(0,1)$ is a fixed constant. Thus agent $k$ changes its position at each time step by moving from its current position a fraction $\alpha$ of its current separation from agent $k-1$  in the direction of agent $k-1$. The purpose of this paper is to develop general techniques which allow one to study the asymptotic behaviour of solutions to the above system and similar more complicated ones. The main questions of interest are (i) which initial constellations of the agents will lead to convergence of the overall system to an equilibrium point in a suitable sense, (ii) what is the equilibrium when it exists and (iii) at what rate does the convergence take place?

In order to be able to answer these questions in a unified manner and also extend our conclusions to a broader class of examples, we consider the more general recurrence relation 
\begin{equation}\label{rec}
x_k(n+1)=T_0x_k(n)+T_1 x_{k-1}(n),\quad k\in\ZZ,\; n\ge0,
\end{equation}
where  $x_k(n)\in\CC^m$ for some given positive integer $m$ and where $T_0$, $T_1$ are $m\times m$ matrices satisfying certain assumptions to be spelled out in due course. We proceed in the main part of this paper by considering a single operator $T$ which acts on suitable spaces of sequences $(x_k)$, indexed by $k\in\ZZ$, as $T(x_k)=(T_0 x_k+T_1 x_{k-1})$. Crucial among the assumptions we make is that there exists a rational function $\phi_T$, the so-called \emph{characteristic function}, such that 
$$T_1R(\lambda,T_0)T_1=\phi_T(\lambda)T_1$$
for all $\lambda\in\CC$ such that the resolvent operator $R(\lambda,T_0)=(\lambda-T_0)^{-1}$ exists. It is this assumption that makes our systems tractable even when the matrices $T_0$ and $T_1$ do not commute, by allowing for a very precise spectral analysis of the operator $T$ which in turn leads to sharp estimates for asymptotic behaviour of its powers $T^n$, $n\ge0$. As we shall see, a characteristic function exists for a broad class of systems including all of the applications we have in mind. In fact, in the cases of particular interest to us the characteristic function $\phi_T$ is of a specific and rather simple form, and  a considerable part of the paper focusses on this important special case.

The general approach taken in this paper can be viewed as a discrete counterpart to the authors' previous paper \cite{PauSei15}, in which the corresponding continuous-time problem is studied. As it turns out, the discrete setting presents its own challenges and requires new techniques but, in return, leads to optimal results which can even be used to improve the known results in the continuous setting. Indeed, this latter fact is one of our main motivations for studying discrete systems even though they are natural and interesting in their own right. Our paper can therefore be viewed as a contribution to the broader study of so called \emph{spatially invariant systems}; see for instance \cite{BamPag02}. More specifically, the introductory example presented above can be viewed as a discrete counterpart of the so-called \emph{robot rendezvous problem} studied in \cite{FeiFra12,FeiFra12b}, while the main motivating examples for more sophisticated cases of the general model arise in the study of so-called \emph{platoon models}; see for instance  \cite{PloSch11, SwaHed96,ZwaFir13}. For related works in the study of multi-agent systems in discrete and continuous time, see for instance~\cite{LinMor07,WenUgr13, PoAnt09report}.

The paper is organised as follows. In Section~\ref{sec:gen} we present  the general operator-theoretic results required to study our class of systems, culminating in Theorem~\ref{thm:asymp_disc}, which gives a complete description of those initial constellations leading to convergent solutions, shows how the limit (when it exists) is related to the initial constellation and furthermore gives an estimate for the rate of convergence for certain initial constellations. In particular, the result answers in the general setting all three questions raised above in the context of the toy model \eqref{eq:bugs_intro}. In Section~\ref{sec:ex} we show explicitly how the general result can be applied both to this simple example and also to a more complex one in which the agents' state vectors consist not only of their positions but involve also a velocity component. In Section~\ref{sec:cont} we provide a link between the discrete and the continuous settings and show how Theorem~\ref{thm:asymp_disc} can be used to improve the main result of \cite{PauSei15} in an important special case. Finally, in Section~\ref{sec:cont_ex} we apply this improved result to give sharper rates of decay in the platoon model studied in \cite{PloSch11, SwaHed96,ZwaFir13}.

The notation we use is more or less standard throughout. Thus, given a complex Banach space $X$, the norm on $X$ will typically be denoted by $\|\cdot\|_X$ or simply by $\|\cdot\|$. In particular, for $m\in\NN$ and $1\le p\le\infty$, we let $\ell^p(\CC^m)$ denote the space of doubly infinite sequences $(x_k)$ such that $x_k\in\CC^m$ for all $k\in\ZZ$ and $\sum_{k\in\ZZ}\|x_k\|^p<\infty$ if $1\le p<\infty$ and $\sup_{k\in\ZZ}\|x_k\|<\infty$ if $p=\infty$. Here and in all that follows we endow the finite-dimensional space $\CC^m$ with the standard Euclidean norm and we consider $X=\ell^p(\CC^m)$ with the norm given for $x=(x_k)$ by $\|x\|_X=(\sum_{k\in\ZZ}\|x_k\|^p)^{1/p}$ if $1\le p<\infty$ and $\|x\|=\sup_{k\in\ZZ}\|x_k\|$ if $p=\infty$. With respect to this norm $X$ is a Banach space for $1\le p\le\infty$ and a Hilbert space when $p=2$. We write $\B(X)$ for the space of bounded linear operators on $X$, and given $T\in\B(X)$ we write $\Ker(T)$ for the kernel and $\Ran(T)$ for the range of $A$. Moreover, we let $\sigma(T)$ denote the spectrum of $T$ and $\rho(T)=\CC\setminus\sigma(T)$ the resolvent set of $T$.  We write $\sigma_p(T)$ for the point spectrum and $\sigma_{ap}(T)$ for the approximate point spectrum of $T$.  For $\lambda\in\rho(T)$ we write $R(\lambda,T)$ for the resolvent operator $(\lambda-T)^{-1}$. Asymptotic notation, such as $O$, $o$ and $\asymp$, is used in the usual way.   Finally, we denote by $\DD$ the open unit disc $\{\lambda\in\CC:|\lambda|<1\}$.

\section{The discrete-time system}\label{sec:gen} 
 
We begin by introducing the general system to be studied.  Given $p$ with $1\le p\le\infty$, let $X=\ell^p(\CC^m)$.    We may write \eqref{rec} together with an initial condition in the form
\begin{equation}
\label{CP_disc} 
\begin{cases}
x(n+1)= Tx(n), \quad n\ge0,\\
 x(0)=x_0\in X,
\end{cases}
\end{equation}
where $x(n)=(x_k(n))$ and $Tx=(T_0x_k+T_1x_{k-1})$ for all $x=(x_k)\in X$. We assume in what follows that 
\begin{equation}\label{char}
T_1 R(\lambda,T_0)T_1=\phi_T(\lambda)T_1,\quad \lambda\in\rho(T_0),
\end{equation}
for some rational function $\phi_T:\rho(T_0)\to\CC$ which we call the \emph{characteristic function} of our system. The existence of a characteristic function is crucial to all that follows and will reduce several key questions about the solutions of \eqref{CP_disc} to questions about the characteristic function of the system. As will become apparent in Section~\ref{sec:ex} below, the assumption that a characteristic function should exist is less restrictive than it may appear and in particular is satisfied in a number of important examples. In particular, it is straightforward to show that a characteristic function exists whenever  $\rank(T_1)=1$. 
\begin{rem}
A standard argument involving Neumann series shows that  if \eqref{char} holds then for $|\lambda|>\|T_0\|$ we have
$$|\phi_T(\lambda)|\le\frac{\|T_1\|}{|\lambda|-\|T_0\|},$$
and in particular $|\phi_T(\lambda)|\to0$ as $|\lambda|\to\infty$. Note also that the set of poles of $\phi_T$ is contained in $\sigma(T_0)$, but the inclusion may be strict. 

\end{rem}

Since the solution of \eqref{CP_disc} is given by $x(n)=T^nx_0$, $n\ge0$, our aim in this section is to investigate the asymptotic properties of the orbits of the operator $T$. In order to prepare the ground for the main result of this section, Theorem~\ref{thm:asymp_disc} below, we begin with a series of preliminary results, the first few of which are taken more or less directly from \cite{PauSei15}. The first result gives a complete description of the spectrum of the operator $T$. 

\begin{thm}
  \label{thm:spec}
Let $X=\ell^p(\CC^m)$ for some $m\in\NN$ and some $p$ satisfying $1\le p\le\infty$, and let $T\in\B(X)$ be as above. Then 
  $$\sigma(T)\setminus\sigma(T_0)=\big\{\lambda\in\rho(T_0):|\phi_T(\lambda)|=1\big\}.$$
  Moreover, the following hold:  
  \begin{itemize}
      \setlength{\itemsep}{0.5ex}
    \item[\textup{(a)}]  If $1\leq p<\infty$, then $\sigma(T)\setminus \sigma(T_0)\subset \sigma_{ap}(T)\setminus \sigma_p(T)$.
\item[\textup{(b)}] If $p=\infty$, then $\sigma(T)\setminus \sigma(T_0)\subset \sigma_p(T)$
  and, given $\lambda\in\sigma(T)\setminus\sigma(T_0)$,
\begin{equation}
\label{eq:eigenspace}
  \Ker(\lambda -T)=\big\{(\phi_T(\lambda)^k x_0):x_0\in \Ran(R(\lambda,T_0)T_1)\big\}.
  \end{equation}
  In particular, $\dim\Ker(\lambda-T)=\rank(T_1)$ for all $\lambda\in\sigma(T)\setminus\sigma(T_0)$.
  \end{itemize}
 Furthermore, for $\lambda\in\sigma(T)\setminus \sigma(T_0)$ the range of $\lambda-T$ is dense in $X$ if and only if $1<p<\infty$.
\end{thm}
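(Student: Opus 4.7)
The plan is to reduce everything to the component-wise form of the resolvent equation and exploit the algebraic identity coming from the characteristic function. For $\lambda\in\rho(T_0)$, the equation $(\lambda-T)x=y$ is equivalent to the first-order recursion
\[
x_k - S_\lambda x_{k-1} = R(\lambda,T_0)y_k,\qquad k\in\ZZ,
\]
where $S_\lambda:=R(\lambda,T_0)T_1$. The characteristic function hypothesis \eqref{char} at once yields the key identity $S_\lambda^{\,2}=\phi_T(\lambda)S_\lambda$, hence $S_\lambda^{\,j+1}=\phi_T(\lambda)^{j}S_\lambda$ for all $j\ge 0$. When $|\phi_T(\lambda)|<1$, summing the recursion forward produces the norm-convergent formula
\[
x_k = R(\lambda,T_0)y_k + \sum_{j\ge 1}\phi_T(\lambda)^{j-1}S_\lambda R(\lambda,T_0)y_{k-j},
\]
which defines a bounded inverse of $\lambda-T$ on every $\ell^{p}$; when $|\phi_T(\lambda)|>1$, the analogous sum running in the opposite direction, namely $x_k = R(\lambda,T_0)y_k - \sum_{j\ge 0}\phi_T(\lambda)^{-j-1}S_\lambda R(\lambda,T_0)y_{k+j}$, does the job. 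This gives the inclusion $\{\lambda\in\rho(T_0):|\phi_T(\lambda)|\ne 1\}\subset\rho(T)$.

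Conversely, suppose $|\phi_T(\lambda)|=1$ and $\lambda\in\rho(T_0)$. Since $\phi_T(\lambda)\ne 0$, the identity $S_\lambda^{\,2}=\phi_T(\lambda)S_\lambda$ shows that the $\phi_T(\lambda)$-eigenspace of $S_\lambda$ coincides with $\Ran S_\lambda = R(\lambda,T_0)\Ran T_1$, which has dimension $\rank T_1$ because $R(\lambda,T_0)$ is a linear isomorphism of $\CC^m$. For any nonzero $x_0$ in this eigenspace the formal sequence $(\phi_T(\lambda)^{k}x_0)_{k\in\ZZ}$ solves $Tx=\lambda x$; since $\|\phi_T(\lambda)^{k}x_0\|=\|x_0\|$ for every $k$, it belongs to $X$ precisely when $p=\infty$, yielding both \eqref{eq:eigenspace} and the dimension formula of part~(b). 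For $1\le p<\infty$ the same reasoning precludes eigenvectors, while truncating to $|k|\le N$ and normalising produces an approximate eigenvector: $(\lambda-T)x^{(N)}$ has only two nonzero entries of $N$-independent norm whereas $\|x^{(N)}\|\asymp N^{1/p}$, so $\lambda\in\sigma_{ap}(T)$. This completes part~(a).

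For the density statement I would argue by duality whenever possible. The Banach adjoint $T^{*}$ on $\ell^{p'}(\CC^{m})$ has the same structure but with the shift reversed, its characteristic function is $\mu\mapsto\overline{\phi_T(\bar\mu)}$, and the analogues of (a) and (b) follow by the same arguments. For $1<p<\infty$ the adjoint version of (a) gives $\bar\lambda\notin\sigma_p(T^{*})$, so $\Ran(\lambda-T)$ is dense; for $p=1$ the adjoint version of (b) produces an eigenvector of $T^{*}$ at $\bar\lambda$ in $\ell^{\infty}$, so the range is not dense. The case $p=\infty$ is the main obstacle, since $\ell^{\infty}$ is not the dual of any space on our list and the routine adjoint argument breaks down. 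Here I would construct the required annihilating functional directly. Write $\omega=\phi_T(\lambda)$; the fact that $\omega$ is an eigenvalue of $S_\lambda$ translates into the singularity of $\lambda-T_0-\omega^{-1}T_1$, so there exists $v\in\CC^{m}\setminus\{0\}$ with $(\lambda-T_0-\omega^{-1}T_1)^{*}v=0$, i.e.\ $T_1^{*}v=\omega(\lambda-T_0)^{*}v$. Fixing a shift-invariant Banach limit $L$ on $\ell^{\infty}(\ZZ;\CC)$, define
\[
\Phi(x) := L\bigl(\omega^{-k}\langle v,x_k\rangle\bigr),\qquad x=(x_k)\in\ell^{\infty}(\CC^{m}).
\]
A short computation using the eigenvector relation for $v$ shows that $\omega^{-k}\langle v,((\lambda-T)y)_k\rangle = b_k-b_{k-1}$ with $b_k:=\omega^{-k}\langle(\lambda-T_0)^{*}v,y_k\rangle$, so that $\Phi$ vanishes on $\Ran(\lambda-T)$ by shift invariance of $L$, while $\Phi$ takes the nonzero value $\langle v,w\rangle$ on $(\omega^{k}w)_{k\in\ZZ}$ for any $w$ with $\langle v,w\rangle\ne 0$. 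This $p=\infty$ Banach-limit construction, rather than any step in the spectrum formula itself, is the step I expect to be the main technical point of the proof.
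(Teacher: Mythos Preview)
Your argument is correct. The paper does not actually prove this theorem here; it simply cites \cite[Theorem~2.3]{PauSei15}. Your route---rewriting $(\lambda-T)x=y$ as the recursion $x_k-S_\lambda x_{k-1}=R(\lambda,T_0)y_k$ with $S_\lambda=R(\lambda,T_0)T_1$, and exploiting the idempotency-type relation $S_\lambda^{2}=\phi_T(\lambda)S_\lambda$ to sum the recursion in the appropriate direction---produces exactly the resolvent formula that the paper later quotes in the proof of Proposition~\ref{prp:pb}, so you have reconstructed the intended argument. The kernel computation, the truncated approximate eigenvectors for $1\le p<\infty$, and the duality argument for density when $1<p\le\infty$ (sic: $1<p<\infty$) and $p=1$ are all standard and correctly carried out.

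Two minor points on the $p=\infty$ step, which you rightly identify as the only non-routine one. First, from $(\lambda-T_0-\omega^{-1}T_1)^{*}v=0$ one obtains $T_1^{*}v=\bar\omega(\bar\lambda-T_0^{*})v$, not $T_1^{*}v=\omega(\lambda-T_0)^{*}v$; the cleanest way to avoid the conjugation bookkeeping is to run the whole construction with transposes and the bilinear pairing $\langle v,w\rangle=\sum_j v_jw_j$, after which your telescoping identity $\omega^{-k}\langle v,((\lambda-T)y)_k\rangle=b_k-b_{k-1}$ holds verbatim. Second, the ``Banach limit'' you need is a shift-invariant mean on $\ell^{\infty}(\ZZ;\CC)$, not the classical one-sided object; such means exist because $\ZZ$ is amenable, but it is worth saying so explicitly.
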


\begin{proof}
See  \cite[Theorem~2.3]{PauSei15}.
\end{proof}

\begin{rem}
As observed in \cite[Remark~2.4]{PauSei15}, the points in $\sigma(T_0)$ can lie either in $\sigma(T)$ or outside it.
\end{rem}

The next result  establishes a useful bound for the norm of the resolvent operator in the neighbourhood of singular points. 

\begin{prp}
  \label{prp:res}
Fix $1\le p\le\infty$ and $m\in\NN$. If $\lambda\in\rho(T_0)$ is such that $|\phi_T(\lambda)|\neq 1$, then
$$  \left|\|R(\lambda,T)\|-\frac{\|R(\lambda,T_0)T_1R(\lambda,T_0)\|}{|1-|\phi_T(\lambda)||}\right|\le\|R(\lambda,T_0)\|.$$
  In particular, for $\lambda_0\in\rho(T_0)$ such that $|\phi_T(\lambda_0)|=1$ we have 
$$\|R(\lambda,T)\|\asymp \frac{1}{|1- |\phi_T(\lambda)||}$$
  as $\lambda\to\lambda_0$ in the region $\{\lambda\in\rho(T_0):|\phi_T(\lambda)|\neq 1\}$.
\end{prp}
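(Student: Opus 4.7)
The plan is to derive an explicit formula for $R(\lambda,T)$ from the coordinatewise resolvent equation and then read off its norm by separating the scalar convolution factor from the matrix factor. Writing $y=R(\lambda,T)x$ componentwise, the equation $(\lambda-T)y=x$ becomes the scalar-in-$k$ recurrence $y_k=R(\lambda,T_0)x_k+Sy_{k-1}$ with $S=R(\lambda,T_0)T_1$. The identity \eqref{char} gives $S^{j+1}=\phi_T(\lambda)^jS$ for all $j\ge 0$, so iterating the recurrence forwards when $|\phi_T(\lambda)|<1$, or solving it backwards when $|\phi_T(\lambda)|>1$, yields the decomposition
$$R(\lambda,T)=D_\lambda+A_\lambda K_\lambda,$$
where $D_\lambda$ acts coordinatewise as $R(\lambda,T_0)$, where $A_\lambda=R(\lambda,T_0)T_1R(\lambda,T_0)$ also acts coordinatewise, and where $K_\lambda$ is a scalar shift-convolution whose kernel (namely $(\phi_T(\lambda)^{n-1})_{n\ge 1}$ in the first case, $(-\phi_T(\lambda)^{n-1})_{n\le 0}$ in the second) has $\ell^1$-norm equal to $1/|1-|\phi_T(\lambda)||$.

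The next step is to compute the two resulting operator norms. One has $\|D_\lambda\|_{\B(X)}=\|R(\lambda,T_0)\|$ because $D_\lambda$ is a pure coordinatewise matrix action. The nontrivial claim is
$$\|A_\lambda K_\lambda\|_{\B(X)}=\frac{\|A_\lambda\|}{|1-|\phi_T(\lambda)||}.$$
The upper bound is routine: estimate $\|(A_\lambda K_\lambda y)_k\|$ by $\|A_\lambda\|$ times the $k$-th entry of the scalar convolution of $(\|y_\ell\|)$ with the absolute-value kernel $a_n=|\phi_T(\lambda)|^{n-1}$, and invoke Young's inequality. For the matching lower bound, pick $v\in\CC^m$ with $\|v\|=1$ and $\|A_\lambda v\|=\|A_\lambda\|$, set $\omega=\phi_T(\lambda)/|\phi_T(\lambda)|$, and test against the phase-aligned sequence $y_\ell=\omega^\ell v$, truncated to $0\le\ell<N$ for $1\le p<\infty$ and used as-is for $p=\infty$. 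The algebraic identity $\omega^j\omega^{k-1-j}=\omega^{k-1}$ collapses the convolution to $(A_\lambda K_\lambda y)_k=\omega^{k-1}A_\lambda v$ times a positive scalar that converges to $1/|1-|\phi_T(\lambda)||$ away from the boundary of the support; a standard truncation argument (sending $N\to\infty$ with a window $K=K(N)\to\infty$, $K/N\to 0$) then produces the matching lower bound on the operator norm.

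The first assertion of the proposition is immediate from the reverse triangle inequality $\bigl|\,\|R(\lambda,T)\|-\|A_\lambda K_\lambda\|\,\bigr|\le\|D_\lambda\|$. The $\asymp$ statement follows because $A_{\lambda_0}\neq 0$ (which comes from \eqref{char} at $\lambda_0$ together with $|\phi_T(\lambda_0)|=1\neq 0$ and the invertibility of $\lambda_0-T_0$), so $\|A_\lambda\|$ stays bounded away from $0$ on a neighbourhood of $\lambda_0$, while $\|R(\lambda,T_0)\|$ remains bounded on that neighbourhood and $1/|1-|\phi_T(\lambda)||\to\infty$ as $\lambda\to\lambda_0$ in the stated region.

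I anticipate the main obstacle to be the exact computation of $\|A_\lambda K_\lambda\|$ rather than merely a one-sided inequality. A naive test vector would yield only the weaker denominator $|1-\phi_T(\lambda)|$ in place of $|1-|\phi_T(\lambda)||$; the phase-alignment via $\omega$ is the crucial manoeuvre, without which the $\asymp$ conclusion near $\lambda_0$ would be lost, since $|1-\phi_T(\lambda)|$ need not vanish there even though $|1-|\phi_T(\lambda)||$ does.
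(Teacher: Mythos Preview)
Your argument is correct and is essentially the approach behind the result: the paper itself defers the proof to \cite[Proposition~2.5]{PauSei15}, but the explicit resolvent decomposition $R(\lambda,T)=D_\lambda+A_\lambda K_\lambda$ you derive is exactly the formula quoted later in the proof of Proposition~\ref{prp:pb}, and the exact norm computation for the convolution part via phase-aligned test sequences is the standard way to turn that formula into the stated two-sided estimate. One small point of phrasing: for $|\phi_T(\lambda)|>1$ the operator $S=R(\lambda,T_0)T_1$ need not be invertible, so ``solving the recurrence backwards'' is not literally available at the vector level; however, your stated kernel $(-\phi_T(\lambda)^{n-1})_{n\le 0}$ is correct and can simply be verified to give a bounded right inverse of $\lambda-T$ (hence the resolvent, by Theorem~\ref{thm:spec}), so this is cosmetic rather than a gap.
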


\begin{proof}
See  \cite[Proposition~2.5]{PauSei15}. 
\end{proof}

The most important consequence of this result for our present purposes is the following observation. Following \cite{PauSei15}, we call the even integer $n=n_T$ appearing in this result the \emph{resolvent growth parameter}. Here we let 
$$\Omega_T=\big\{\lambda\in\rho(T_0):|\phi_T(\lambda)|=1\big\}.$$

\begin{lem}\label{lem:even}
  Fix $1\le p\le\infty$ and $m\in\NN$, and suppose  $0\in\Omega_T\subset\DD\cup\{1\}$.   Then
  there exists an even integer $n$ with $2\le n\le 2m$ such that $
  1-|\phi_T(e^{i\theta})|\asymp |\theta|^n$
  as $\theta\to 0$.
\end{lem}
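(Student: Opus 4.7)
The plan is to study $h(\theta) := 1 - |\phi_T(e^{i\theta})|^2$ rather than $1 - |\phi_T(e^{i\theta})|$ directly, since $1 + |\phi_T(e^{i\theta})| \to 2$ as $\theta \to 0$ and the desired asymptotic transfers from one to the other. Note that $h$ is real-analytic in a neighbourhood of $\theta = 0$ (because $\phi_T$ is holomorphic on $\rho(T_0)$, which contains the point $1$) and vanishes at $\theta = 0$. I would establish three properties of $h$ in turn: (a) $h \geq 0$ near $\theta = 0$; (b) $h \not\equiv 0$ near $\theta = 0$; (c) the order of vanishing of $h$ at $\theta = 0$ is at most $2m$. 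Once these are in hand, (a) and (b) force the order of vanishing to be a finite positive even integer $n \geq 2$, and (c) caps it at $2m$.

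For (a), I would argue by contradiction. Suppose $|\phi_T(e^{i\theta_0})| > 1$. Since $\phi_T$ has only finitely many poles, all lying in $\sigma(T_0)$, we may perturb $\theta_0$ slightly so that the ray $\{r e^{i\theta_0} : r \geq 1\}$ avoids every pole while keeping $|\phi_T(e^{i\theta_0})| > 1$. Along this ray, $|\phi_T|$ is continuous and tends to $0$ as $r \to \infty$ by the remark after~\eqref{char}, so the intermediate value theorem produces $r_0 > 1$ with $r_0 e^{i\theta_0} \in \Omega_T$, contradicting $\Omega_T \subset \DD \cup \{1\}$. For (b), if $h$ vanished identically on an arc of $\mathbb{T}$ near $1$, then by real-analyticity (passed through the rational function $z \mapsto 1 - \phi_T(z)\overline{\phi_T}(1/z)$) we would have $|\phi_T| \equiv 1$ on all of $\mathbb{T}$ away from the finitely many poles, forcing $\Omega_T$ to contain a cofinite subset of $\mathbb{T}$ and violating the same hypothesis.

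For (c), I would exploit the rational structure. The adjugate formula for $R(\lambda, T_0)$ gives $\phi_T = q/p$ with $p(\lambda) = \det(\lambda I - T_0)$ of degree $m$ and $q$ a polynomial of degree at most $m - 1$. Using $\overline{\phi_T(e^{i\theta})} = \bar q(e^{-i\theta})/\bar p(e^{-i\theta})$, one computes
$$h(\theta) \;=\; \frac{p(e^{i\theta})\,\widetilde{p}(e^{i\theta}) - e^{i\theta}\,q(e^{i\theta})\,\widetilde{q}(e^{i\theta})}{p(e^{i\theta})\,\widetilde{p}(e^{i\theta})},$$
where $\widetilde p(z) = z^m \bar p(1/z)$ and $\widetilde q(z) = z^{m-1} \bar q(1/z)$ are the reciprocal polynomials. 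The numerator is a polynomial in $z = e^{i\theta}$ of degree at most $2m$, while the denominator equals $|p(e^{i\theta})|^2$ up to the nonvanishing factor $e^{im\theta}$ and is therefore nonzero at $\theta = 0$ because $1 \in \rho(T_0)$ gives $p(1) \neq 0$. Hence the order of vanishing of $h$ at $\theta = 0$ equals that of the numerator at $z = 1$, which is at most its degree, $2m$.

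The main obstacle is the positivity step (a). It is genuinely global: it uses the decay $|\phi_T(\lambda)| \to 0$ at infinity, the level-set assumption $\Omega_T \subset \DD \cup \{1\}$, and a short technical detour to sidestep possible poles of $\phi_T$ on $\mathbb{T}$. By contrast, the upper bound $n \leq 2m$ in (c) is a clean degree count on the explicit rational expression for $|\phi_T(e^{i\theta})|^2$, and the even-order conclusion simply reflects the fact that on $\mathbb{T}$ the natural object is $|\phi_T|^2$ rather than $\phi_T$ itself.
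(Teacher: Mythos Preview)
Your proof is correct and follows essentially the same strategy as the paper's own argument, which simply refers to the analogous continuous-time result \cite[Lemma~2.6]{PauSei15} and says to rewrite everything in terms of $\sin\theta$ and $\cos\theta$, using $\sin\theta\sim\theta$. Both arguments express $1-|\phi_T(e^{i\theta})|^2$ as a ratio with nonvanishing denominator and numerator of bounded degree, deduce the upper bound $n\le 2m$ by a degree count, and obtain evenness and the lower bound $n\ge2$ from nonnegativity (via $\Omega_T\subset\DD\cup\{1\}$ and decay of $\phi_T$ at infinity) together with non-triviality; the only cosmetic difference is that you work in the coordinate $z=e^{i\theta}$ using reciprocal polynomials, whereas the paper works in the real variables $\sin\theta,\cos\theta$.
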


\begin{proof}
The result follows by an argument analogous to the proof of \cite[Lemma~2.6]{PauSei15} by considering polynomials in $\sin\theta$ and $\cos\theta$, and by using the fact that $\sin\theta\sim \theta$ as $\theta\to0$.
\end{proof}

We now restrict our attention to systems in which $\sigma(T_0)=\{1-\alpha\}$ and the characteristic function $\phi_T$ is of the specific form
\begin{equation}
\label{eq:spec_phi}
  \phi_T(\lambda)=\frac{\alpha^k}{(\lambda-1+\alpha)^k},\quad \lambda\in\CC\setminus\{ 1-\alpha\},
  \end{equation}
where $\alpha\in(0,1)$ and $k\in\NN$ are given constants. As shall become apparent, even this class is large enough to contain many natural applications; see for instance Section~\ref{sec:ex}. It follows from Theorem~\ref{thm:spec} that in this case 
\begin{equation}\label{eq:special_spec}
\sigma(T)\setminus\{1-\alpha\}=\big\{\lambda\in\CC:|\lambda-\alpha+1|=\alpha\big\}
\end{equation}
and that the resolvent growth parameter is $n_T=2$. The next theorem establishes that under the above assumptions the operator $T$ is power-bounded, which is to say that $\sup_{n\ge0}\|T^n\|<\infty$. Note that even though this result is an analogue of \cite[Theorem~3.1 and Lemma~3.2]{PauSei15} the method of proof used there does not, to the knowledge of the authors, transfer  to the  discrete setting considered here.

\begin{prp}\label{prp:pb}
Let $X=\ell^p(\CC^m)$ for some $m\in\NN$ and some $p$ satisfying $1\le p\le\infty$, and let $T\in\B(X)$ be as above. Then $T$ is power-bounded.
\end{prp}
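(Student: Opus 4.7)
The plan is to exploit the specific form \eqref{eq:spec_phi} of $\phi_T$ to express $T^n$ as an explicit finite matrix-valued convolution and to bound the $\ell^1$-norm of its kernel uniformly in $n$. Write $T_0=(1-\alpha)I+N$, so that $N\in\CC^{m\times m}$ is nilpotent by the spectral hypothesis $\sigma(T_0)=\{1-\alpha\}$. Since $N$ is nilpotent, $R(\lambda,T_0)=\sum_{j\ge 0}(\lambda-1+\alpha)^{-j-1}N^j$ as a finite sum for every $\lambda\ne 1-\alpha$. Substituting this expansion into \eqref{char} and matching coefficients of the resulting Laurent series against \eqref{eq:spec_phi} yields the key algebraic identity
$$
T_1N^jT_1=\alpha^k T_1\ \text{if}\ j=k-1,\qquad T_1N^jT_1=0\ \text{otherwise.}
$$
In particular $T_1^2=0$ when $k\ge 2$.

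Let $S$ denote the bilateral shift $(Sx)_k=x_{k-1}$ on $X$ and let $D_Y$ denote pointwise multiplication by a matrix $Y$, so that $T=(1-\alpha)I_X+A+B$ with $A=D_N$ and $B=D_{T_1}S$. Both $A$ and $B$ commute with $S$, and the identity above translates into the reduction rule $BA^iB=\alpha^k D_{T_1}S^2$ for $i=k-1$ and $BA^iB=0$ otherwise. Expanding $T^n$ binomially in $(1-\alpha)I_X$ and then $(A+B)^j$ as a sum over length-$j$ words in $\{A,B\}$, this rule forces the surviving words to be $A^j$ or those of the form $A^{i_0}(BA^{k-1})^{b-1}BA^{i_b}$ with $b\ge 1$ and $i_0,i_b\ge 0$. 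Each such word collapses to $\alpha^{k(b-1)}D_{N^{i_0}T_1N^{i_b}}S^b$, and grouping terms by the shift exponent $b$ gives
$$
T^n=\sum_{b\ge 0}D_{C_{n,b}}S^b,\qquad C_{n,0}=T_0^n,
$$
where each $C_{n,b}$ ($b\ge 1$) is an explicit linear combination of matrices $N^{i_0}T_1N^{i_b}$ with coefficients $\binom{n}{s+(b-1)k+1}(1-\alpha)^{n-s-(b-1)k-1}\alpha^{k(b-1)}$ indexed by $s=i_0+i_b$.

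Since $N$ is nilpotent, only the finitely many pairs with $s\le 2(m-1)$ contribute. It therefore suffices to bound $\sum_{b\ge 1}\binom{n}{s+(b-1)k+1}(1-\alpha)^{n-s-(b-1)k-1}\alpha^{k(b-1)}$ uniformly in $n$ for each fixed $s$. Setting $r=s+(b-1)k+1$ rewrites this sum as $\alpha^{-s-1}$ times a subseries of the binomial expansion of $1=(\alpha+(1-\alpha))^n$, and therefore bounds it by $\alpha^{-s-1}$. Together with $\sup_n\|T_0^n\|<\infty$ (which holds since $|1-\alpha|<1$ and $N$ is nilpotent), this gives $\sum_{b\ge 0}\|C_{n,b}\|\le C$ with $C$ independent of $n$. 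Finally, each operator $D_{C_{n,b}}S^b$ has norm $\|C_{n,b}\|$ on $\ell^p(\CC^m)$ for every $p\in[1,\infty]$, so the triangle inequality yields $\|T^n\|_{\ell^p\to\ell^p}\le\sum_{b\ge 0}\|C_{n,b}\|\le C$ uniformly in $n$ and $p$, showing that $T$ is power-bounded.

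The principal difficulty is the non-commutative word expansion of $T^n$ and the accompanying bookkeeping: the crux is that the characteristic-function assumption \eqref{char} combined with \eqref{eq:spec_phi} is rigid enough to enforce a single reduction rule which collapses every surviving word in $\{A,B\}$ to a scalar multiple of a monomial $D_{N^{i_0}T_1N^{i_b}}S^b$, so that all non-commutativity is absorbed into the two nilpotent endpoint factors and the remaining estimation reduces to a finite sum of binomial probabilities.
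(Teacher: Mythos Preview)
Your argument is correct and offers a genuinely different route from the paper's. The paper represents $T^n$ via the Dunford functional calculus, using the explicit series form of $R(\lambda,T)$ to obtain contour integrals of the type $\oint_\Gamma \lambda^{n+j}(\lambda-1+\alpha)^{-(k\ell+2m)}\,\dd\lambda$, evaluates these by Cauchy's formula, and bounds the resulting sums by recognising them as partial sums of the binomial expansion of $(\alpha+(1-\alpha))^{n+j}=1$. You instead work purely algebraically: the Jordan decomposition $T_0=(1-\alpha)I+N$ together with \eqref{char} and \eqref{eq:spec_phi} yields the reduction rule $T_1N^jT_1=\alpha^k\delta_{j,k-1}T_1$, which collapses every non-vanishing word in the expansion of $((1-\alpha)I+D_N+D_{T_1}S)^n$ to a scalar multiple of $D_{N^{i_0}T_1N^{i_b}}S^b$. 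The endgame is the same: a subseries of $\sum_r\binom{n}{r}\alpha^r(1-\alpha)^{n-r}=1$.

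What each buys: your approach is more elementary (no contour integration) and makes the convolution structure $T^n=\sum_{b\ge0}D_{C_{n,b}}S^b$ with $\sum_b\|C_{n,b}\|\le C$ completely explicit, which immediately gives the bound uniformly in $p$. The paper's approach sits more naturally within the resolvent framework used throughout and requires less combinatorial bookkeeping with non-commutative words. One minor remark: your derivation tacitly uses $k\le m$ (so that the term $N^{k-1}$ is available); this is implicit in the hypotheses, since if $k>m$ then $N^{k-1}=0$ forces $T_1R(\lambda,T_0)T_1\equiv0$, contradicting $\phi_T\not\equiv0$ and $T_1\ne0$.
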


\begin{proof}
We begin by observing that $\sigma(T)\subset\DD\cup\{1\}$ and that, by the same argument as in \cite[Section~2]{PauSei15}, the resolvent operator has the explicit form
$$R(\lambda,T)x=\bigg(R(\lambda,T_0)x_k+\sum_{\ell=0}^\infty\phi(\lambda)^\ell R(\lambda,T_0)T_1R(\lambda,T_0) x_{k-\ell-1}\bigg)$$
for $\lambda\in\CC$ with $|\lambda|>1$ and $x=(x_k)\in X$. Writing $R_\lambda$ as a shorthand for $R(\lambda,T_0)$ when $|\lambda|>1$, it follows from the functional calculus for bounded operators that 
$$T^nx=\bigg(T_0^nx_k+\sum_{\ell=0}^\infty\bigg(\frac{1}{2\pi i}\oint_\Gamma \lambda^n \phi(\lambda)^\ell R_\lambda T_1 R_\lambda\, \dd\lambda\bigg)x_{k-\ell-1}\bigg),\quad n\ge0,$$
 where $\Gamma$ is any piecewise smooth and positively oriented contour containing the closed unit disc in its interior. In particular,
\begin{equation}\label{eq:pb_est}
\|T^n\|\le \|T_0^n\|+\frac{1}{2\pi}\sum_{\ell=0}^\infty\left\|\oint_\Gamma \lambda^n \phi(\lambda)^\ell R_\lambda T_1 R_\lambda\, \dd\lambda\right\|,\quad n\ge0.
\end{equation}
Moreover,  $T_0$ has spectral radius $1-\alpha\in (0,1)$ and in particular is power-bounded, so in order to show that $T$ is power-bounded it remains only to obtain a uniform bound over $n\ge0$ for the series on the right-hand side.

Note first that our assumptions imply the existence of constant $m\times m$ matrices $C_1,\dotsc,C_{2m-2}$ such that
$$R_\lambda T_1 R_\lambda=\frac{1}{(\lambda-1+\alpha)^{2m}}\sum_{j=0}^{2m-2} C_j\lambda^j,\quad \lambda\in\CC\setminus\{ 1-\alpha\},$$
and hence
$$\sum_{\ell=0}^\infty\left\|\oint_\Gamma \lambda^n \phi(\lambda)^\ell R_\lambda T_1 R_\lambda\, \dd\lambda\right\|\le \sum_{j=0}^{2m-2} \|C_j\|\sum_{\ell=0}^\infty\bigg|\oint_\Gamma\frac{\alpha^{k\ell}\lambda^{n+j}}{(\lambda-1+\alpha)^{k\ell+2m}}\dd\lambda\bigg|,$$
for all $n\ge0$. Now fix $j$ with $1\le j\le 2m-2$. Letting $D_\lambda$ denote differentiation with respect to $\lambda$, a simple application of Cauchy's integral formula shows that, for $n\ge0$, 
$$\begin{aligned}
\frac{1}{2\pi}\sum_{\ell=0}^\infty\bigg|\oint_\Gamma\frac{\alpha^{k\ell}\lambda^{n+j}}{(\lambda-1+\alpha)^{k\ell+2m}}&\,\dd\lambda\bigg|=\sum_{\ell=0}^\infty\alpha^{k\ell}\frac{\big|\big[D_\lambda^{k\ell+2m-1}\lambda^{n+j}\big]_{\lambda=1-\alpha}\big|}{(k\ell+2m-1)!}\\
&\!\!\!\!\!\!\le \frac{1}{\alpha^{2m-1}}\sum_{\ell=0}^{n+j}\binom{n+j}{\ell}\alpha^\ell (1-\alpha)^{n+j-\ell},
\end{aligned}$$
and by the binomial theorem the right-hand side equals $\alpha^{-(2m-1)}$. Combining these estimates with \eqref{eq:pb_est} gives
$$\|T^n\|\le \|T_0^n\|+\frac{1}{\alpha^{2m-1}}\sum_{j=0}^{2m-1}\|C_j\|,\quad n\ge0,$$
and hence $T$ is  power-bounded, as required.
\end{proof}

We conclude this section with one of our main results, which gives a detailed description of the asymptotic behaviour of solutions to \eqref{CP_disc}. Note first though that differentiating  \eqref{char} shows that 
$$-T_1R(1,T_0)^2T_1=\phi_T'(1)T_1.$$
For the function $\phi_T$  defined in \eqref{eq:spec_phi} we have $\phi_T'(1)\ne0$, and hence in this case the operator $T_1R(1,T_0)$ restricts to an isomorphism from $\Ran(R(1,T_0)T_1)$ onto $\Ran(T_1)$. In what follows we write $L$ for the inverse of this isomorphism.

Theorem \ref{thm:asymp_disc} below is analogous to \cite[Theorem~4.3]{PauSei15}. Note however that the rates of convergence obtained here are in general slightly sharper than those obtained in \cite{PauSei15} and in particular involve no logarithmic factors. Indeed the rates are optimal in the sense that if  $n^{-1/2}$ were replaced by some $r_n>0$, $n\ge1$, with $r_n=o(n^{-1/2})$ as $n\to\infty$, then the statement would become false. We obtain this strengthened result as a consequence of a theorem due to Dungey \cite{Dun08}. In what follows, if $X=\ell^p(\CC^m)$ for some $m\in\NN$ and some $p$ satisfying $1\le p\le\infty$, we let $S$ denote the right-shift operator defined by $Sx=(x_{k-1})$ for $x=(x_k)\in X$ and we let
$$Y=\left\{x_0\in X:\lim_{n\to\infty}x(n) \mbox{ exists}\right\},$$
where $x(n)$, $n\ge0$, is the solution of \eqref{CP_disc}.

\begin{thm}\label{thm:asymp_disc} 
Let $X=\ell^p(\CC^m)$ for some $m\in\NN$ and some $p$ satisfying $1\le p\le\infty$, and consider the operator $T\in\B(X)$ defined as above. Define the operator $M\in\B(X)$ by $M(x_k)=(T_1 R(1,T_0) x_k),$ and let the operator $L$ and the space $Y$  be  as above.
\begin{enumerate}
\item[\textup{(a)}] Given $x_0\in X$, we have $x_0\in Y$ if and only if there exists $y_0\in\Ran(T_1)$ such that for the constant sequence  $y$ with entry $y_0$ is an element of $X$ and we have
\begin{equation}
\label{eq:Cesaro_disc}
\bigg\|\frac{1}{n}\sum_{k=1}^nS^{k}Mx_0-y\bigg\|_X\to0,\quad n\to\infty.
\end{equation}
Moreover, if this is the case then $\|x(n)-z\|_X\to0$ as $n\to\infty$, where $z\in X$ is the constant sequence with entry $Ly_0$. In particular, $Y=X$ if and only if $1<p<\infty$ and if $1\le p<\infty$  the only possible candidate for $y$ and $z$ is $0$.
\item[\textup{(b)}] If $x_0\in X$ is such that the convergence in \eqref{eq:Cesaro_disc} is like $O(n^{-1})$ as $n\to\infty$, then
\begin{equation}\label{eq:rate1}
\|x(n)-z\|_X=O\big(n^{-1/2}\big),\quad n\to\infty,
\end{equation}
where $z$ is as above.
\item[\textup{(c)}] For all $x_0\in X$ we have
\begin{equation}\label{eq:rate2}
\|x(n+1)-x(n)\|_X=O\big(n^{-1/2}\big),\quad n\to\infty.
\end{equation}
\end{enumerate}
Furthermore, the rates in \eqref{eq:rate1} and \eqref{eq:rate2} are optimal.
\end{thm}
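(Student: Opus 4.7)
The plan is to reduce parts (b) and (c) to the norm estimate $\|T^n(I-T)\| = O(n^{-1/2})$ furnished by a theorem of Dungey \cite{Dun08}, and to handle part (a) and optimality by mean ergodic arguments together with explicit test sequences.

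I would first verify that the constant sequence $z$ with entry $Ly_0$ is a fixed point of $T$, so that $x(n)-z = T^n(x_0-z)$ and $x(n+1)-x(n) = T^n(T-I)x_0$. Using $\phi_T(1)=1$ and hence $T_1 R(1,T_0) T_1 = T_1$, a short computation shows that $Ly_0 = -\phi_T'(1)^{-1} R(1,T_0) y_0$ for $y_0\in\Ran(T_1)$ satisfies $(I-T_0)Ly_0 = T_1 Ly_0 = -\phi_T'(1)^{-1} y_0$, so $Tz=z$. Next I would invoke Dungey's theorem: for a power-bounded $T$ on a Banach space with $\sigma(T)\cap\T = \{1\}$ and $\|R(\lambda,T)\| = O(|1-\lambda|^{-\beta})$ as $\lambda\to 1$ with $|\lambda|>1$, one has $\|T^n(I-T)\| = O(n^{-1/\beta})$. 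The hypotheses hold in our setting with $\beta=2$: power-boundedness by Proposition~\ref{prp:pb}, the peripheral-spectrum condition by \eqref{eq:special_spec}, and the resolvent bound by Proposition~\ref{prp:res} combined with $n_T=2$ from Lemma~\ref{lem:even} applied to $\phi_T$ of the form \eqref{eq:spec_phi}. This yields \eqref{eq:rate2} at once, and reduces \eqref{eq:rate1} to showing $x_0-z\in\Ran(I-T)$.

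This last point is what I expect to be the main obstacle. The Cesàro rate hypothesis gives uniform boundedness of $\bigl\|\sum_{k=1}^n S^k(Mx_0-y)\bigr\|_X$, and since $S$ is an isometry a standard converse of the mean ergodic theorem produces $w\in X$ with $Mx_0-y = (I-S)w$; noting $Mz = y$ (which is just $T_1 R(1,T_0) Ly_0 = y_0$), this reads $M(x_0-z) = (I-S)w$. The crucial structural observation is that $P := T_1 R(1,T_0)$ is an idempotent $m\times m$ matrix (from $T_1 R(1,T_0) T_1 = T_1$), so replacing $w$ coordinate-wise by $Pw$ preserves the identity $(I-S)w = M(x_0-z)$ (using $PM=M$) while arranging $Pw_k = w_k$ for all $k$. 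With such $w$, setting
\[
v_k = R(1,T_0)(x_{0,k} - Ly_0) + R(1,T_0)w_{k-1},\qquad k\in\ZZ,
\]
a direct expansion of $(I-T)v$ at entry $k$ using $(I-T_0) R(1,T_0) = I$ leaves $(x_{0,k}-z_k) + (I-P) w_{k-2}$, and the remainder vanishes by the choice of $w$. Thus $v\in X$ and $(I-T)v = x_0-z$, so Dungey gives $\|T^n(x_0-z)\|_X = \|T^n(I-T)v\|_X = O(n^{-1/2})$.

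Part (a) follows from the mean ergodic theorem for the power-bounded $T$: $Y = \Fix(T)\oplus\overline{\Ran(I-T)}$, with limit equal to the ergodic projection onto $\Fix(T)$. Theorem~\ref{thm:spec}(b) at $\lambda=1$ identifies $\Fix(T)\cap X$ with constant sequences having entries in $\Ran(R(1,T_0)T_1)= L\Ran(T_1)$, fixing the form of $z$; the factorisation $I-T = (I-T_0^{(X)})(I-MS)$ (with $T_0^{(X)}$ acting coordinate-wise, the factors commuting, and $I-T_0^{(X)}$ invertible) then translates the remaining ergodic condition for $T$ into the Cesàro condition \eqref{eq:Cesaro_disc} for $S$ applied to $Mx_0$; the assertion $Y=X$ iff $1<p<\infty$ reflects the ergodicity range of the shift $S$ on $\ell^p$. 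For optimality I would use the eigenvectors $u^{(\lambda)} = (\phi_T(\lambda)^k v)_{k\in\ZZ}$ from Theorem~\ref{thm:spec}(b) with $\lambda = 1-\alpha+\alpha e^{i\varphi}\in\sigma(T)$: since $T^n(I-T)u^{(\lambda)} = (1-\lambda)\lambda^n u^{(\lambda)}$ has $X$-norm of order $\varphi\, e^{-cn\varphi^2}$, optimising over $\varphi$ yields $\|T^n(I-T)\| \gtrsim n^{-1/2}$, and a standard uniform boundedness argument (together with the fact that each such $u^{(\lambda)}$ already satisfies the Cesàro hypothesis of (b)) then rules out any $o(n^{-1/2})$ improvement in either \eqref{eq:rate1} or \eqref{eq:rate2}.
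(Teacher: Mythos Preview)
Your proposal has a genuine gap at the central step. The statement you attribute to Dungey---that power-boundedness, $\sigma(T)\cap\T=\{1\}$, and a resolvent bound $\|R(\lambda,T)\|=O(|1-\lambda|^{-2})$ together yield $\|T^n(I-T)\|=O(n^{-1/2})$---is not what \cite{Dun08} says, and in fact it is not known to hold on general Banach spaces. Resolvent-based quantified Katznelson--Tzafriri results of this type (see \cite{Sei14,Sei15b}) produce an extra logarithmic factor outside the Hilbert-space case $p=2$; the paper's own remark immediately following the proof makes exactly this point. Since removing the logarithm for all $1\le p\le\infty$ is the whole purpose of the theorem, your argument as written does not prove parts (b) and (c).

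The paper's route is genuinely different. The version of Dungey's theorem actually used is the equivalence: $\|T^n(I-T)\|=O(n^{-1/2})$ if and only if $Q=(T-\beta)/(1-\beta)$ is power-bounded for some $\beta\in(0,1)$. The key structural observation---which you are missing---is that $Q$ is again an operator of the same coupled-shift form, with characteristic function
\[
\phi_Q(\lambda)=\frac{\gamma^k}{(\lambda-1+\gamma)^k},\qquad \gamma=\frac{\alpha}{1-\beta},
\]
so that for $\beta\in(0,1-\alpha)$ one has $\gamma\in(0,1)$ and Proposition~\ref{prp:pb} applies a second time to give power-boundedness of $Q$. This self-similarity under the M\"obius rescaling is what buys the log-free rate and is the heart of the proof.

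Two smaller remarks. First, for part~(a) and the identification of $z$ the paper simply translates to the continuous-time operator $A=T-I$ and quotes \cite[Theorem~4.3]{PauSei15}; your direct factorisation approach is plausible but relies on a Browder-type converse (bounded partial sums of $S^k u$ imply $u\in\Ran(I-S)$) that you should justify carefully on $\ell^p$ for $p\in\{1,\infty\}$. Second, your optimality argument uses the $\ell^\infty$-eigenvectors from Theorem~\ref{thm:spec}(b), which a priori bounds only the $\ell^\infty$ operator norm; the paper instead invokes \cite[Theorem~2.4]{Sei14} together with the $p$-independent resolvent asymptotics from Proposition~\ref{prp:res}, which handles all $p$ uniformly.
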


\begin{proof}
Observe that $x(n)=T^nx_0$, $n\ge0$, so all of the statements can be understood as statements about the orbits of the operator $T$. We show first that $Y=X_0\oplus X_1$, where $X_0=\Ker(I-T)$ and $X_1$ denotes the closure in $X$ of $\Ran(I-T)$. Indeed, it is clear that $X_0\subset Y$ and that $X_0\cap X_1=\{0\}$. Note also that $T$ is power-bounded by Proposition~\ref{prp:pb} and that by \eqref{eq:special_spec} we have $\sigma(T)\cap\T=\{1\}.$ It follows from the Katznelson-Tzafriri theorem \cite[Theorem~1]{KT86} that $\|T^n(I-T)\|\to0$ as $n\to\infty$, and in particular $\Ran(I-T)\subset Y$. Since $T$ is power-bounded, it follows from a straightforward approximation argument that $X_1\subset Y$. Hence $X_0\oplus X_1\subset Y$. Now suppose conversely that $x_0\in Y$. Then by definition the orbit $T^nx_0$, ${n\ge0}$, converges in norm to a limit, and in particular the Ces\`aro averages 
$$\frac1n\sum_{k=1}^n T^kx_0,\quad n\ge1,$$
 also converge in norm. It follows for instance from \cite[Theorem~1.3 of Section~2.1]{Kre85} that $x_0\in X_0\oplus X_1$. Hence $Y= X_0\oplus X_1$, as required. The above argument also shows that if $x_0\in Y$ then $\|x(n)-Px_0\|_X\to0$ as $n\to\infty$, where $P$ is the projection from $Y$ onto $X_0$ along $X_1$.

In order to obtain the quantified statements, observe  that by \cite[Theorem~1.2]{Dun08} we have $\|T^n(I-T)\|=O(n^{-1/2})$ as $n\to\infty$ if and only if the operator $Q \in\B(X)$ defined by 
 $$Q=\frac{T-\beta }{1-\beta}$$
 is power-bounded for some $\beta\in(0,1)$. However, a simple calculation shows that since the characteristic function of $T$ is given by 
 $$\phi_T(\lambda)=\frac{\alpha^k}{(\lambda-1+\alpha)^k},\quad \lambda\in\CC\setminus\{ 1-\alpha\},$$
 for some $\alpha\in(0,1)$ and $k\in\NN$, the operator $Q$ has an associated characteristic function given by 
 $$\phi_Q(\lambda)=\frac{\gamma^k}{(\lambda-1+\gamma)^k},\quad \lambda\in\CC\setminus\{ 1-\gamma\},$$
 where $\gamma=\alpha/(1-\beta)$. Thus if $\beta\in(0,1-\alpha)$, or equivalently $\gamma\in(0,1)$, it follows from another application of Proposition~\ref{prp:pb} that $Q$ is power-bounded, and hence $\|T^n(I-T)\|=O(n^{-1/2})$ as $n\to\infty$. In particular, $\|T^nx\|_X=O(n^{-1/2})$ as $n\to\infty$ for all $x\in \Ran(I-T)$. Optimality of this rate follows straightforwardly  from \cite[Theorem~2.4]{Sei14}, since by Proposition~\ref{prp:res}, Lemma~\ref{lem:even} and the fact that the resolvent growth parameter is $n_T=2$ we have $\|R(e^{i\theta},T)\|\asymp|\theta|^{-2}$ as $\theta\to0$.
 
Thus it remains only to establish the characterisation of the elements of $Y$ and of $X_0\oplus\Ran(I-T)$ in terms of the stated Ces\`aro conditions. However, both of these characterisations follow directly from \cite[Theorem~4.3]{PauSei15}. Indeed, let $A\in\B(X)$ be given by $A=T-I$, so that $Ax=(A_0x_k+A_1x_{k-1})$ for all $x=(x_k)\in X$, where $A_0=T_0-I$ and $A_1=T_1$. A simple calculation shows that $A$ admits a characteristic function $\phi_A$, which satisfies $\phi_A(\lambda)=\phi_T(\lambda+1)$ for all $\lambda\in\CC\setminus\{-\alpha\}$. Noting that $\phi_A(0)=\phi_T(1)=1$, the required statements now follow immediately from the corresponding statements in \cite[Theorem~4.3]{PauSei15}. 
\end{proof}

\begin{rem}
Instead of appealing to Dungey's theorem \cite[Theorem~1.2]{Dun08} in order to obtain a rate of decay for $\|T^n(I-T)\|$ as $n\to\infty$, it would also be possible to use the results in \cite{Sei14, Sei15b} together with Proposition~\ref{prp:res} and Lemma~\ref{lem:even} above in order to obtain similar estimates. These estimates would, however, involve logarithmic terms unless $p=2$, and hence would be weaker than the results obtained by means of Dungey's theorem.
\end{rem}

\section{Examples of discrete-time systems}\label{sec:ex}

In this section we apply Theorem~\ref{thm:asymp_disc} to two simple but illustrative examples. The first of these is the example introduced in \eqref{eq:bugs_intro}. In this case $m=1$, $T_0=1-\alpha$ and $T_1=\alpha$, where $\alpha\in(0,1)$ is a given constant. It is clear that 
$$\sigma(T)=\big\{\lambda\in\CC:|\lambda-1+\alpha|=\alpha\big\},$$
 and a trivial calculation shows that the associated operator $T$ admits the characteristic function
$$\phi_T(\lambda)=\frac{\alpha}{\lambda-1+\alpha},\quad \lambda\in\CC\setminus\{1-\alpha\}.$$
In particular, as a direct consequence of Theorem~\ref{thm:asymp_disc} we obtain the following result about the asymptotic behaviour of solutions to \eqref{eq:bugs_intro}.

\begin{cor}\label{cor:frogs} 
Let $X=\ell^p(\CC)$ for some $p$ satisfying $1\le p\le\infty$, and consider the solution $x(n)$, $n\ge0$, of the system \eqref{eq:bugs_intro} with initial data $x_0\in X$.
\begin{enumerate}
\item[\textup{(a)}] The solution converges to a limit in the norm of $X$ if and only if there exists $c\in \CC$ such that for the constant sequence  $y$ with entry $c$ is an element of $X$ and we have
\begin{equation}
\label{eq:Cesaro_disc_frogs}
\bigg\|\frac{1}{n}\sum_{k=1}^nS^{k}x_0-y\bigg\|_X\to0,\quad n\to\infty.
\end{equation}
Moreover, if this is the case then $\|x(n)-y\|_X\to0$ as $n\to\infty$. In particular, all initial vectors $x_0$ lead to convergence if and only if $1<p<\infty$, and if $1\le p<\infty$ then the only possible candidate for $c$ is 0.
\item[\textup{(b)}] If $x_0\in X$ is such that  the convergence in \eqref{eq:Cesaro_disc_frogs} is like $O(n^{-1})$ as $n\to\infty$, then
\begin{equation}\label{eq:rate1_frogs}
\|x(n)-y\|_X=O\big(n^{-1/2}\big),\quad n\to\infty,
\end{equation}
where $y$ is as above.
\item[\textup{(c)}] For all $x_0\in X$ we have
\begin{equation}\label{eq:rate2_frogs}
\|x(n+1)-x(n)\|_X=O\big(n^{-1/2}\big),\quad n\to\infty.
\end{equation}
\end{enumerate}
Furthermore, the rates in \eqref{eq:rate1_frogs} and \eqref{eq:rate2_frogs} are optimal.
\end{cor}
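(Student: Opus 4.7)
The plan is to observe that this example fits the framework of Theorem~\ref{thm:asymp_disc} verbatim and then translate its conclusions. I would begin by identifying the data: here $m = 1$, $T_0 = 1-\alpha$ and $T_1 = \alpha$ act as scalars on $\CC$, so that $\sigma(T_0) = \{1-\alpha\}$ and $R(\lambda,T_0) = (\lambda - 1 + \alpha)^{-1}$ for $\lambda \neq 1-\alpha$. Substituting into \eqref{char} immediately gives the characteristic function $\phi_T(\lambda) = \alpha/(\lambda - 1 + \alpha)$, which is of the form \eqref{eq:spec_phi} with $k = 1$; the stated spectrum of $T$ then follows from \eqref{eq:special_spec}.

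Next I would compute the auxiliary operators $M$ and $L$ from Theorem~\ref{thm:asymp_disc} in this scalar setting. Since $T_1 R(1,T_0) = \alpha/\alpha = 1$, the multiplier $M$ is just the identity on $X$, and consequently $Mx_0 = x_0$ for every $x_0 \in X$. Similarly, $\Ran(T_1) = \Ran(R(1,T_0)T_1) = \CC$, and the isomorphism $T_1 R(1,T_0)\colon \CC \to \CC$ whose inverse defines $L$ is again the identity. Thus an element $y_0 \in \Ran(T_1)$ is just a scalar $c \in \CC$, the candidate limit $z$ from the theorem is the constant sequence with entry $c$, that is, the same sequence $y$ that appears in Corollary~\ref{cor:frogs}(a), and the Cesàro condition \eqref{eq:Cesaro_disc} reduces verbatim to \eqref{eq:Cesaro_disc_frogs}.

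With these identifications in hand, parts (a), (b) and (c) of Corollary~\ref{cor:frogs}, together with the optimality of the rates, follow directly from the corresponding parts of Theorem~\ref{thm:asymp_disc}. There is no real obstacle; the only moment requiring a modest amount of care is the bookkeeping that confirms $M = L = I$ in this scalar case, so that the abstract Cesàro hypothesis and the abstract limit translate into the simpler statements of the corollary without any further transformation.
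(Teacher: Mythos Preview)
Your proposal is correct and follows exactly the paper's approach: identify the scalar data $m=1$, $T_0=1-\alpha$, $T_1=\alpha$, verify that the characteristic function has the form \eqref{eq:spec_phi} with $k=1$, and invoke Theorem~\ref{thm:asymp_disc}. Your explicit computation that $M=L=I$ in this scalar case is the one bit of bookkeeping the paper leaves implicit, and it correctly explains why the Ces\`aro condition and the limit simplify to the forms stated in the corollary.
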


Note that in the above example the operator $T$ is in fact a contraction. Moreover, the operators $T_0$ and $T_1$ commute in this example, which makes the problem simpler and, at least in principle, makes it possible to use more direct techniques. Our second example is more representative of the general situation in Theorem~\ref{thm:asymp_disc} in the sense that, in general, the operators $T_0$ and $T_1$ do not  commute and the operator $T$ is not a contraction. Indeed, suppose now that $m=2$ and that 
$$T_0
= \left( \begin{array}{cc}
1 & 1 \\
\beta_0 & \beta_1  \end{array} \right)
\quad\mbox{and}\quad
T_1= \left( \begin{array}{cc}
0 & -1 \\
0&0  \end{array} \right),$$
where $\beta_0,\beta_1\in\CC$ are fixed parameters. The corresponding discrete-time system arises for instance if one considers agents whose state vectors $x_k(n)$ are of the form
$$x_k(n)= \left( \begin{array}{c}
d-d_k(n) \\v_k(n)  \end{array} \right),\quad k\in\ZZ,\; n\ge0,$$
where $d_k(n)$ denotes the separation between agent $k$ and agent $k-1$ at time $n$, $d$ is a specified target separation, and $v_k(n)$ is the velocity of agent $k$ at time $n$. This system is precisely of the form \eqref{CP_disc} for the operators $T_0$ and $T_1$ specified above provided that
$${v}_k(n+1)=\beta_0y_k(n)+\beta_1v_k(n),\quad  k\in\ZZ,\; n\ge0,$$
where $y_k(n)=d-d_k(n)$ is the discrepancy between the target separation and the actual separation  of agents $k$ and $k-1$ at time $n$. Thus the parameters $\beta_0, \beta_1$ can be understood as feedback control parameters which should be chosen so as to make the resulting system stable in an appropriate sense. Letting $\alpha_0=-\beta_0$ and $\alpha_1=1-\beta_1$, we observe that the operator $T$ admits the characteristic function $\phi_T(\lambda)=\alpha_0/p(\lambda-1)$ for $p(\lambda-1)\ne0$, where $p(\lambda)=\lambda^2+\alpha_1\lambda+\alpha_0$. Now fix $\alpha_0\in(0,1)$ and set $\alpha_1=2\smash{\alpha_0^{1/2}}$. Then $\sigma(T_0)=\{1-\alpha\}$  and $\phi_T$ is of the form \eqref{eq:spec_phi} with $k=2$ and $\alpha=\smash{\alpha_0^{1/2}}$. A simple application of Theorem~\ref{thm:asymp_disc} now gives the following result. Note that the asymptotic behaviour is determined solely by the initial discrepancies and is independent of the initial velocities.

\begin{cor}\label{cor:disc_contr} 
Let $X=\ell^p(\CC^2)$ for some $p$ satisfying $1\le p\le\infty$, and consider the solution $x(n)$, $n\ge0$, of the system \eqref{CP_disc} for $T$ as above and with initial data $x_0\in X$.
\begin{enumerate}
\item[\textup{(a)}] The solution converges to a limit in the norm of $X$ if and only if there exists $c\in \CC$ such that for the constant sequence  $y$ with entry $c$ is an element of $\ell^p(\CC)$ and we have
\begin{equation}
\label{eq:Cesaro_disc_disc_contr}
\bigg\|\frac{1}{n}\sum_{k=1}^nS^{k}y_0-y\bigg\|_{\ell^p(\CC)}\!\!\!\to0,\quad n\to\infty,
\end{equation}
where $y_0=(y_k(0))\in\ell^p(\CC)$ is the vector of initial discrepancies. Moreover, if this is the case then $\|x(n)-z\|_X\to0$ as $n\to\infty$, where $z\in X$ is given by
$$z=\left( \begin{array}{c}\dots, 
\left( \begin{array}{c}
c \\-\alpha c/2  \end{array} \right),
\left( \begin{array}{c}
c \\-\alpha c/2  \end{array} \right),\dots
 \end{array} \right).$$
 In particular, all initial vectors $x_0$ lead to convergence if and only if $1<p<\infty$, and if $1\le p<\infty$ then the only possible candidate for $c$ is 0.
\item[\textup{(b)}] If $y_0\in\ell^p(\CC)$ is such that  the convergence in \eqref{eq:Cesaro_disc_disc_contr} is like $O(n^{-1})$ as $n\to\infty$, then
\begin{equation}\label{eq:rate1_disc_contr}
\|x(n)-z\|_X=O\big(n^{-1/2}\big),\quad n\to\infty,
\end{equation}
where $z$ is as above.
\item[\textup{(c)}] For all $x_0\in X$ we have
\begin{equation}\label{eq:rate2_disc_contr}
\|x(n+1)-x(n)\|_X=O\big(n^{-1/2}\big),\quad n\to\infty.
\end{equation}
\end{enumerate}
Furthermore, the rates in \eqref{eq:rate1_frogs} and \eqref{eq:rate2_frogs} are optimal.
\end{cor}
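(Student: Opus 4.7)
The plan is to verify the hypotheses of Theorem~\ref{thm:asymp_disc} for the system at hand and then to compute the operators $M$ and $L$ explicitly in this setting, after which parts (a)--(c) together with the optimality assertion follow directly from Theorem~\ref{thm:asymp_disc}.

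For the hypotheses, the key observation is that the choice $\alpha_1 = 2\alpha_0^{1/2}$ makes $p(\lambda) = \lambda^2 + \alpha_1 \lambda + \alpha_0$ a perfect square, $p(\lambda) = (\lambda + \alpha_0^{1/2})^2$. Hence, setting $\alpha = \alpha_0^{1/2}$, we obtain $\phi_T(\lambda) = \alpha^2/(\lambda - 1 + \alpha)^2$, exactly of the form \eqref{eq:spec_phi} with $k=2$. A short computation of the characteristic polynomial of $T_0$ yields $\sigma(T_0) = \{1 - \alpha\}$, as required.

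Next I would compute $M$. Since
$$I - T_0 = \begin{pmatrix} 0 & -1 \\ \alpha^2 & 2\alpha \end{pmatrix}$$
has determinant $\alpha^2$, a direct calculation gives
$$T_1 R(1, T_0) = \begin{pmatrix} 1 & 0 \\ 0 & 0 \end{pmatrix},$$
so $M$ acts on $x = (x_k)$ with $x_k = (y_k, v_k)$ by retaining only the discrepancy component and zeroing out the velocity. Consequently, the Ces\`aro condition \eqref{eq:Cesaro_disc} of Theorem~\ref{thm:asymp_disc} collapses to the condition \eqref{eq:Cesaro_disc_disc_contr} on the sequence $y_0 = (y_k(0))$ of initial discrepancies, with the possible limit sequences parameterised by $c \in \CC$ via the generator $(1, 0)$ of $\Ran(T_1)$.

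Finally, to identify $z$, I would compute
$$R(1, T_0) T_1 = \begin{pmatrix} 0 & -2/\alpha \\ 0 & 1 \end{pmatrix},$$
whose range is spanned by $(1, -\alpha/2)$. Since $T_1 R(1, T_0)$ maps this vector back to the generator $(1, 0)$ of $\Ran(T_1)$, the inverse isomorphism $L$ satisfies $L(c, 0) = (c, -\alpha c/2)$, yielding the stated constant sequence $z$. Parts (b) and (c) together with the optimality of the $O(n^{-1/2})$ rate then follow verbatim from the corresponding assertions of Theorem~\ref{thm:asymp_disc}. The whole argument is essentially computational; the only conceptual point is the recognition that the perfect-square structure of $p$ is precisely what forces $\phi_T$ into the special form required by the theorem, and this is therefore not a genuine obstacle.
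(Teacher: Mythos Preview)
Your proposal is correct and follows exactly the approach the paper takes: the paper states that the result is ``a simple application of Theorem~\ref{thm:asymp_disc}'' after verifying (in the paragraph preceding the corollary) that $\sigma(T_0)=\{1-\alpha\}$ and that $\phi_T$ has the form \eqref{eq:spec_phi} with $k=2$. Your computations of $T_1R(1,T_0)$, $R(1,T_0)T_1$ and the resulting map $L$ simply make explicit the details the paper leaves to the reader, and they are all correct.
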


More sophisticated models of this type, perhaps involving acceleration, could be handled in an analogous way. We return indirectly to one particular such example in Section~\ref{sec:cont_ex} below.

\section{From discrete to  continuous time}\label{sec:cont}

In this section we turn to the continuous-time analogue of \eqref{CP_disc} studied in \cite{PauSei15} and in particular we show how Theorem~\ref{thm:asymp_disc} can be used to sharpen the main result \cite[Theorem~4.3]{PauSei15} in an important special case. Indeed, let $X=\ell^p(\CC^m)$ for some $m\in\NN$ and some $p$ satisfying $1\le p\le\infty$, and consider the abstract Cauchy problem
\begin{equation}\label{CP_cont}
\begin{cases}
\dot{x}(t)= Ax(t), \quad t\ge0,\\
 x(0)=x_0\in X,
\end{cases}
\end{equation}
where $Ax=(A_0x_k+A_1x_{k-1})$ for all $x=(x_k)\in X$ for suitable $m\times m$ matrices $A_0$ and $A_1$. Here we assume that $A_1\ne 0$ and, analogously to our treatment of the discrete-time system, that the operator $A$ admits a characteristic function $\phi_A$ satisfying
$$A_1R(\lambda,A_0)A_1=\phi_A(\lambda)A_1,\quad \lambda\in\rho(A_0).$$
Systems of this type, and in particular the asymptotic behaviour of solutions of \eqref{CP_cont}, are studied in detail in \cite{PauSei15}. We restrict ourselves here to the important special case in which $\sigma(A_0)=\{-\zeta\}$ and the characteristic function has the  form 
\begin{equation}\label{eq:char_spec}
\phi_A(\lambda)=\frac{\zeta^k}{(\lambda+\zeta)^k},\quad \lambda\in\CC\setminus\{-\zeta\},
\end{equation}
for some $\zeta>0$ and some $k\in\NN$. It follows from the general results in \cite{PauSei15} that in this case 
$$\sigma(A)\setminus\{-\zeta\}=\big\{\lambda\in\CC:|\lambda+\zeta|=\zeta\big\},$$
and moreover the semigroup generated by the operator $A$ is uniformly bounded. From these facts together with certain resolvent bounds analogous to Proposition~\ref{prp:res}, the authors obtained in \cite[Theorem~4.3]{PauSei15} an asymptotic result for the solutions of \eqref{CP_cont}. The proof of this result relied on recent results in the theory of non-uniform stability of $C_0$-semigroups. We now present an improved version of this result for the special case introduced above, where the characteristic function has the form given in \eqref{eq:char_spec}. The proof uses Theorem~\ref{thm:asymp_disc} for the discrete-time setting together  Dungey's result  \cite[Theorem~1.2]{Dun08}, which also establishes a connection between the rates in the continuous and the discrete setting. Note that, by the same argument as in the discrete setting, there exists an isomorphism $L$ mapping $\Ran(A_1)$ onto $\Ran(A_0^{-1}A_1)$ which is the inverse of the restriction of $A_1A_0^{-1}$ to $\Ran(A_0^{-1}A_1)$. We also define
$$Y=\left\{x_0\in X:\lim_{t\to\infty} x(t)\;\mbox{exists}\right\},$$
where $x(t)$, $t\ge0$, is the solution of \eqref{CP_cont}

\begin{thm}\label{thm:asymp_cont} 
Let $X=\ell^p(\CC^m)$ for some $m\in\NN$ and some $p$ satisfying $1\le p\le\infty$, and consider the operator $A\in\B(X)$ defined as above. Define the operator $M\in\B(X)$ by $M(x_k)=(A_1 A_0^{-1} x_k),$ and let the operator $L$ and the space $Y$  be  as above.
\begin{enumerate}
\item[\textup{(a)}] Given $x_0\in X$, we have $x_0\in Y$ if and only if there exists $y_0\in\Ran(A_1)$ such that for the constant sequence  $y$ with entry $y_0$ is an element of $X$ and we have
\begin{equation}
\label{eq:Cesaro_cont}
\bigg\|\frac{1}{n}\sum_{k=1}^nS^{k}Mx_0-y\bigg\|_X\!\!\!\to0,\quad n\to\infty.
\end{equation}
Moreover, if this is the case then $\|x(t)-z\|_X\to0$ as $t\to\infty$, where $z\in X$ is the constant sequence with entry $Ly_0$. In particular, $Y=X$ if and only if $1<p<\infty$ and if $1\le p<\infty$  the only possible candidate for $y$ and $z$ is $0$.
\item[\textup{(b)}] If $x_0\in X$ is such that the convergence in \eqref{eq:Cesaro_cont} is like $O(n^{-1})$ as $n\to\infty$, then
\begin{equation}\label{eq:rate1_cont}
\|x(t)-z\|_X=O\big(t^{-1/2}\big),\quad t\to\infty,
\end{equation}
where $z$ is as above.
\item[\textup{(c)}] For all $x_0\in X$ we have
\begin{equation}\label{eq:rate2_cont}
\|\dot{x}(t)\|_X=O\big(t^{-1/2}\big),\quad t\to\infty.
\end{equation}
\end{enumerate}
Furthermore, the rates in \eqref{eq:rate1_cont} and \eqref{eq:rate2_cont} are optimal.
\end{thm}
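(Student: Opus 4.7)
The strategy is to reduce the continuous theorem to its already-established discrete counterpart via a small-parameter Euler-type discretisation of the semigroup. Fix any $s\in(0,1/\zeta)$ and set $T=I+sA\in\B(X)$, so that $T$ has the row/shift form $T=T_0'+T_1'S$ with $T_0'=I+sA_0$ and $T_1'=sA_1$. The elementary resolvent identity $R(\mu,T_0')=s^{-1}R((\mu-1)/s,A_0)$ then yields
\[
T_1'R(\mu,T_0')T_1'=s\,\phi_A\!\left(\frac{\mu-1}{s}\right)A_1=\frac{(s\zeta)^k}{(\mu-1+s\zeta)^k}\,T_1',
\]
so $T$ admits a characteristic function of the exact form \eqref{eq:spec_phi} with $\alpha=s\zeta\in(0,1)$ and the same exponent $k$. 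Thus Theorem~\ref{thm:asymp_disc} applies to $T$: in particular $T$ is power-bounded and $\|T^n(I-T)\|=O(n^{-1/2})$ as $n\to\infty$. Since $I-T=-sA$, this gives $\|T^nA\|=O(n^{-1/2})$, and more generally all three conclusions of Theorem~\ref{thm:asymp_disc} are now available for the operator $T$.

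The transition to the semigroup proceeds through a Poisson-averaging argument. Because $A$ is bounded and $A=s^{-1}(T-I)$, the semigroup admits the norm-convergent expansion
\[
e^{tA}=e^{-t/s}\sum_{n=0}^\infty\frac{(t/s)^n}{n!}T^n,\qquad t\ge 0.
\]
Inserting $\|T^nA\|=O(n^{-1/2})$ and using the classical estimate $e^{-u}\sum_{n\ge 1}u^nn^{-1/2}/n!=O(u^{-1/2})$ as $u\to\infty$ (which is just $\EE[N^{-1/2}\mathbf{1}_{N\ge 1}]$ for $N\sim\mathrm{Poisson}(u)$ and reflects the concentration of this distribution on $n\asymp u$) yields $\|e^{tA}A\|=O(t^{-1/2})$, proving \eqref{eq:rate2_cont} in view of $\dot x(t)=Ae^{tA}x_0$. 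Part~(b) follows by the same averaging applied to $\|T^nx_0-z\|_X=O(n^{-1/2})$, which Theorem~\ref{thm:asymp_disc}(b) provides once the Cesàro conditions are matched, and using that the limit sequence $z$ belongs to $\ker A$ and is therefore fixed both by $T$ and by $e^{tA}$.

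For part~(a), the identities $\ker(I-T)=\ker A$ and $\overline{\Ran(I-T)}=\overline{\Ran A}$ show that the invariant decomposition of $X$ underlying both kinds of asymptotic behaviour is the same. Power-boundedness of $T$ transfers through the Poisson expansion to uniform boundedness of the semigroup, and in conjunction with the spectral property $\sigma(A)\cap\ii\RR=\{0\}$ inherited from \cite{PauSei15} the Katznelson--Tzafriri theorem for bounded $C_0$-semigroups delivers $Y=\ker A\oplus\overline{\Ran A}$. The Cesàro characterisation of this space then descends directly from Theorem~\ref{thm:asymp_disc}(a): a brief computation gives $T_1'R(1,T_0')=-A_1A_0^{-1}=-M$, so the discrete Cesàro condition for $T$ coincides with \eqref{eq:Cesaro_cont} up to a sign change in $y_0$, which is exactly compensated by the corresponding sign flip in the isomorphism $L_T=-L$, leaving the limit $z=(Ly_0)_{k\in\ZZ}$ unchanged.

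Optimality of the rates in \eqref{eq:rate1_cont} and \eqref{eq:rate2_cont} will follow from the continuous analogue of Proposition~\ref{prp:res} established in \cite{PauSei15}, which gives $\|R(\ii\eta,A)\|\asymp|\eta|^{-2}$ as $\eta\to 0$, combined with the standard resolvent-based lower bounds for bounded semigroups from \cite{Sei14,Sei15b}. The main technical obstacle I anticipate is the Poisson-averaging step in the second paragraph: while the asymptotics of $\EE[N^{-1/2}]$ are classical, extracting the precise $t^{-1/2}$ rate uniformly over $x_0$ in a bounded set requires a careful split of the sum into the regions $n\le t/(2s)$, $n\asymp t/s$ and $n\ge 2t/s$, with Gaussian-type tail bounds on the outer regions, and equally careful bookkeeping of the sign and scaling conventions that arise in translating the Cesàro data from $T$ back to $A$.
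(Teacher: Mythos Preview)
Your proposal is correct and follows essentially the same architecture as the paper: introduce $T=I+sA$ for small $s>0$, verify that its characteristic function has the form \eqref{eq:spec_phi} with $\alpha=s\zeta$, and then invoke Theorem~\ref{thm:asymp_disc}. The only substantive difference is in the last step. The paper obtains $\|Ae^{tA}\|=O(t^{-1/2})$ by quoting the implication (i)$\Rightarrow$(v) of Dungey's theorem \cite[Theorem~1.2]{Dun08} as a black box, and simply cites \cite[Theorem~4.3]{PauSei15} for all of part~(a) and the qualitative structure; you instead carry out the Poisson-averaging transfer $e^{tA}=e^{-t/s}\sum_n (t/s)^n T^n/n!$ by hand and derive part~(a) by matching $\ker(I-T)=\ker A$, $\overline{\Ran(I-T)}=\overline{\Ran A}$ and the sign bookkeeping $M_T=-M_A$, $L_T=-L$. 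Your route is more self-contained (it does not rely on the reader tracking down the relevant implication in \cite{Dun08}) at the cost of having to justify the Poisson moment estimate and the sign cancellations explicitly; the paper's route is shorter but uses two external references. The underlying mechanism is the same, and indeed the Poisson representation is precisely the device behind the implication in Dungey's theorem that the paper invokes.
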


\begin{proof}
All of the statements are contained in \cite[Theorem~4.3]{PauSei15}, except for the sharper rates of convergence without logarithms, which require us to show that the $C_0$-semigroup $(T(t))_{t\ge0}$ generated by $A$ satisfies $\|AT(t)\|=O(t^{-1/2})$ as $t\to\infty$.  For $\varepsilon>0$, let $T_\varepsilon=\varepsilon A+I$. A straightforward calculation shows that $T_\varepsilon$ admits the characteristic function
$$\phi_{T_\varepsilon}(\lambda)=\phi_A\big(\varepsilon^{-1}(\lambda-1)\big)=\bigg(\frac{\varepsilon\zeta}{\lambda-1+\varepsilon\zeta}\bigg)^k,\quad \lambda\in\CC\setminus\{1-\varepsilon\zeta\}.$$
Now fix $\varepsilon_0\in(0,\zeta^{-1})$ and let $\alpha=\varepsilon_0\zeta$ and $T=T_{\varepsilon_0}$. Then $T$ has characteristic function of the form given in \eqref{eq:spec_phi} and hence Theorem~\ref{thm:asymp_disc} implies that $\|T^n(I-T)\|=O(n^{-1/2})$ as $n\to\infty$. By the implication (i)$\implies$(v) of \cite[Theorem~1.2]{Dun08} (with $n=1$) we see that the $C_0$-semigroup $(T_{\varepsilon_0}(t))_{t\ge0}$ generated by $A_{\varepsilon_0}=\varepsilon_0 A$ satisfies $\| A_{\varepsilon_0}T_{\varepsilon_0}(t)\|=O(t^{-1/2})$ as $t\to\infty$, from which the result follows immediately.
\end{proof}

\begin{rem}
By removing the logarithmic factor present in \cite[Theorem~4.3]{PauSei15} in the above special case, Theorem~\ref{thm:asymp_cont} gives a partial answer to the question raised in \cite[Remark~4.14(b)]{PauSei15}. It remains open whether, as was conjectured in \cite[Remark~4.14(b)]{PauSei15}, the logarithmic factor can always be removed in the more general setting of \cite[Theorem~4.3]{PauSei15},  where the resolvent growth parameter may be different from 2, but the above result certainly makes this seem  plausible.
\end{rem}

\section{An example in continuous time}\label{sec:cont_ex}

In this final section we return to an important example studied for isntance in \cite{PloSch11, SwaHed96,ZwaFir13}, the so-called \emph{platoon model}. This can be viewed as a continuous-time analogue of the system considered in Corollary~\ref{cor:disc_contr} but with agents' accelerations taken into account as well as their positions and velocities. Specifically, the state vector of agent $k$ now takes to form
$$x_k(t)= \left( \begin{array}{c}
y_k(t) \\
v_k(t)-v\\
a_k(t)  \end{array} \right),\quad k\in\ZZ,\; t\ge0,$$
where $y_k(t)=d_k-d_k(t)$ denotes the discrepancy between the \emph{agent-specific} target separation $d_k$ between agents $k$ and $k-1$ and their actual distance $d_k(t)$  at time $t$, $v_k(t)$ is the velocity of agent $k$ at time $t$, $v$ the target velocity of the entire platoon, and $a_k(t)$ is the acceleration of agent $k$ at time $t$. In particular, we now have $m=3$ in \eqref{CP_cont}. For details on the general platoon model, see for instance \cite[Section~5]{PauSei15} and also \cite{PloSch11, SwaHed96,ZwaFir13}. We restrict ourselves here to the particular case in which the matrices $A_0$ and $A_1$ are given by 
$$A_0
= \left( \begin{array}{ccc}
 0&1& 0 \\
 0 & 0 & 1\\
-\alpha_0 & -\alpha_1 & -\alpha_2  \end{array} \right)
\quad\mbox{and}\quad
A_1= \left( \begin{array}{ccc}
0 & -1 & 0 \\
0&0 & 0\\
0&0 & 0 \end{array} \right)$$
with $\alpha_0=\zeta^3$, $\alpha_1=3\zeta^2$ and $\alpha_2=3\zeta$ for some $\zeta>0$. Then $\sigma(A_0)=\{-\zeta\}$ and  the corresponding operator $A$ admits a characteristic function of the form given in \eqref{eq:char_spec} with $k=3$. The following result, obtained here as an immediate consequence of Theorem~\ref{thm:asymp_cont}, is an improved version of \cite[Theorem~5.1]{PauSei15}.

\begin{cor}\label{cor:plat} 
Let $X=\ell^p(\CC^3)$ for some $p$ satisfying $1\le p\le\infty$, and consider the solution $x(t)$, $t\ge0$, of the system \eqref{CP_cont} for $A$ as above and with initial data $x_0\in X$.
\begin{enumerate}
\item[\textup{(a)}] The solution converges to a limit in the norm of $X$ if and only if there exists $c\in \CC$ such that for the constant sequence  $y$ with entry $c$ is an element of $\ell^p(\CC)$ and we have
\begin{equation}
\label{eq:Cesaro_plat}
\bigg\|\frac{1}{n}\sum_{k=1}^nS^{k}y_0-y\bigg\|_{\ell^p(\CC)}\!\!\!\to0,\quad n\to\infty,
\end{equation}
where $y_0=(y_k(0))\in\ell^p(\CC)$ is the vector of initial discrepancies. Moreover, if this is the case then $\|x(t)-z\|_X\to0$ as $t\to\infty$, where $z\in X$ is given by
$$z=\left( \begin{array}{c}\dots, 
\left( \begin{array}{c}
c \\-\zeta c/3\\0  \end{array} \right),
\left( \begin{array}{c}
c \\-\zeta c/3\\0  \end{array} \right),\dots
 \end{array} \right).$$
 In particular, all initial vectors $x_0$ lead to convergence if and only if $1<p<\infty$, and if $1\le p<\infty$ then the only possible candidate for $c$ is 0.
\item[\textup{(b)}] If $y_0\in\ell^p(\CC)$ is such that the convergence in \eqref{eq:Cesaro_plat} is like $O(n^{-1})$ as $n\to\infty$, then
\begin{equation}\label{eq:rate1_plat}
\|x(t)-z\|_X=O\big(t^{-1/2}\big),\quad t\to\infty,
\end{equation}
where $z$ is as above.
\item[\textup{(c)}] For all $x_0\in X$ we have
\begin{equation}\label{eq:rate2_plat}
\|\dot{x}(t)\|_X=O\big(t^{-1/2}\big),\quad t\to\infty.
\end{equation}
\end{enumerate}
Furthermore, the rates in \eqref{eq:rate1_plat} and \eqref{eq:rate2_plat} are optimal.
\end{cor}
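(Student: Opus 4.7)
The plan is to derive Corollary~\ref{cor:plat} directly from Theorem~\ref{thm:asymp_cont} once the hypotheses of that theorem have been verified for these specific matrices and the isomorphism $L$ has been computed explicitly. First I would confirm that $\sigma(A_0)=\{-\zeta\}$: expanding the characteristic polynomial of $A_0$ along the first column gives $(\lambda+\zeta)^3$ immediately. Next I would verify that $A$ admits a characteristic function of the form \eqref{eq:char_spec} with $k=3$. Writing $A_1=-e_1e_2^\top$ (a rank-one matrix, so the existence of a characteristic function is automatic), the identity $A_1BA_1=-(e_2^\top Be_1)A_1$, valid for any $3\times 3$ matrix $B$, reduces the task to computing the $(2,1)$-entry of $R(\lambda,A_0)$; a single cofactor calculation yields $(R(\lambda,A_0))_{21}=-\zeta^3/(\lambda+\zeta)^3$, so that $\phi_A(\lambda)=\zeta^3/(\lambda+\zeta)^3$ as required.

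With the hypotheses of Theorem~\ref{thm:asymp_cont} in place, the next step is to translate its Cesàro condition \eqref{eq:Cesaro_cont} into the scalar statement \eqref{eq:Cesaro_plat} and to identify $z$ explicitly. A direct inversion of $A_0$ shows that $A_1A_0^{-1}$ is the rank-one matrix whose only nonzero entry is $-1$ in position $(1,1)$, so that the $j$-th entry of $Mx_0$ is $(-y_j(0),0,0)^\top$. Since only the first coordinate is nonzero, convergence in $\ell^p(\CC^3)$ of this vector-valued Cesàro average is isometrically equivalent to scalar Cesàro convergence in $\ell^p(\CC)$, and one obtains \eqref{eq:Cesaro_plat} under the correspondence $y_0=-ce_1\in\Ran(A_1)$.

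It then remains to compute $Ly_0=-cLe_1$. Noting that $\Ran(A_0^{-1}A_1)$ is one-dimensional and spanned by $A_0^{-1}e_1=(-3/\zeta,1,0)^\top$, I would seek $Le_1$ in the form $\alpha A_0^{-1}e_1$ with $\alpha$ determined by $(A_1A_0^{-1})(\alpha A_0^{-1}e_1)=e_1$. A short calculation gives $A_0^{-2}e_1=(6/\zeta^2,-3/\zeta,1)^\top$ and hence $A_1A_0^{-2}e_1=(3/\zeta)e_1$, forcing $\alpha=\zeta/3$ and therefore $Le_1=(-1,\zeta/3,0)^\top$. Consequently $Ly_0=(c,-\zeta c/3,0)^\top$, exactly the entry of $z$ stated in the corollary. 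Parts (b), (c) and the optimality claim then follow immediately from the corresponding statements of Theorem~\ref{thm:asymp_cont}. There is no substantive obstacle; the most delicate point is simply matching sign conventions between the vector-valued Cesàro condition of the theorem and the scalar form \eqref{eq:Cesaro_plat} so as to recover exactly the entries of $z$ as stated.
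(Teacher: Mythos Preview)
Your proposal is correct and follows exactly the approach the paper takes: the paper states the corollary as ``an immediate consequence of Theorem~\ref{thm:asymp_cont}'' after noting that $\sigma(A_0)=\{-\zeta\}$ and that $\phi_A$ has the form \eqref{eq:char_spec} with $k=3$, and you simply make these verifications and the identification of $L$ explicit. The computations you outline (the cofactor for $(R(\lambda,A_0))_{21}$, the form of $A_1A_0^{-1}$, and the value $Le_1=(-1,\zeta/3,0)^\top$) are all correct.
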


\bibliography{robots-reference}
\bibliographystyle{plain}
\end{document}